\documentclass{article}

\usepackage{amsmath}        
\usepackage{amsfonts}      
\usepackage{amsthm}        
\usepackage{bbding} 
\usepackage{bm}             
\usepackage{graphicx}      
\usepackage{fancyvrb}

\newtheorem{theorem}{Theorem}[section]
\newtheorem{lemma}[theorem]{Lemma}

\newtheorem{example}[theorem]{Example}
\newtheorem{definition}[theorem]{Definition}

\newcommand{\x}{\mathbf{x}}
\newcommand{\e}{\text{e}}
\newcommand{\R}{\mathbb{R}}
\newcommand{\N}{\mathbb{N}}
\newcommand{\cC}{\mathcal{C}}
\newcommand{\BP}{\mathbb{P}}

\newcommand{\BQ}{\mathbb{Q}}
\newcommand{\bN}{{\mathbf N}}

\newcommand{\D}{\mathrm{d}}

\DeclareMathOperator{\E}{\mathbb{E}\,}

\newcommand{\abs}[1]{\left|{#1}\right|}

\newcommand{\comp}[1]{\mathcal{C}^{({#1})}}
\newcommand{\compp}[1]{\mathcal{C}^{{#1}}}

\begin{document}
\pagestyle{myheadings}

\title{Gaussian approximation for
functionals\\ of Gibbs particle processes}

\author{Daniela Novotn\'{a}, Viktor Bene\v{s}\\\\
Department of Probability and Mathematical Statistics,\\ Faculty of Mathematics and Physics, Charles University,\\ Sokolovsk\'{a} 83, 18675 Praha 8, Czech Republic}


\maketitle

\maketitle
\subsection*{Abstract}
In the paper asymptotic properties of functionals of stationary Gibbs particle processes are derived. Two known techniques from the point process theory in the Euclidean space $\R^d$ are extended to the space of compact sets on $\R^d$ equipped with the Hausdorff metric. First, conditions for the existence of the stationary Gibbs point process with given conditional intensity have been simplified recently. Secondly, the Malliavin-Stein method was applied to the estimation of Wasserstein distance between the Gibbs input and standard Gaussian distribution. We transform these theories to the space of compact sets and use them to derive a Gaussian approximation  for functionals of a planar Gibbs segment process.

\noindent{\bf Keywords:} asymptotics of functionals, innovation, stationary Gibbs particle process, Wasserstein distance

\noindent{\bf MSC:} {60D05, 60G55}

\section{Introduction}
\label{sintro}

Recently several papers paid attention to the limit theory of functionals of Gibbs point processes in the Euclidean space, cf. \cite{BYY16, SY13, Tor17, XY15}. In the present paper we are dealing with the question how to develop these results to Gibbs processes of geometrical objects (particles). There are at least three ways how to do it. One natural approach is to extend asymptotic results to Gibbs marked point processes, see e.g. \cite{Mase00}. In applications marks correspond to the geometrical properties of particles, they can be either scalar or vector or particles themselves. In the literature it is often just mentioned that asymptotic results from point process theory can be easily generalized to the marked point processes. This is typically so for processes with independent marks, which may not be the case of Gibbs processes. Another approach is to parametrize some particle attributes and deal with the point process on the parametric space, see e.g. \cite{VB17}. 

In the present paper we are trying to deal directly with particle processes in the sense of \cite{SW08}, defined on the space of compact sets equipped with the Hausdorff distance. Our aim is first to verify that the existence of a stationary Gibbs particle process is guaranteed under analogous conditions as stated by \cite{Der17} for Gibbs point processes. Secondly we find that the methodology of \cite{Tor17} based on Malliavin-Stein method can be developed to Gibbs particle processes. The general background in that paper is formulated on Polish spaces, which covers the space of compact sets. However, the part devoted to Gibbs process is discussed in the Euclidean space only. 

Finally we present examples of two functionals of segment processes in the plane where the Gaussian approximation can be derived using additionally an integral geometric argument.

\section{Preliminaries}\label{sprelim}

Let $\cC^d$ be the space of compact subsets (particles) of $(\R^d,{\cal B}^d),$ 
equipped with the Hausdorff metric and let $\cC^{(d)}=\cC^{d}\setminus\{\emptyset\}.$ Moreover, let ${\cal B}^d,\,{\cal B}(\cC^d)$ denote the Borel $\sigma$-algebras on $\R^d,\,\cC^d,$ respectively. Note that ${\cal B}(\cC^d)$ is equivalent to the Borel $\sigma$-algebra generated by the Fell topology on the space of closed subsets of $\R^d$ restricted to the space $\cC^d$ (cf. \cite[Theorem 2.4.1]{SW08}). Moreover, it can be shown that $\cC^d$ is Polish (cf. \cite[Theorem A.26]{LastPenrose17}). 
Let $\bN^d$ denote the space of all locally finite subsets $\x$ on $\cC^{(d)},$ i.e. 
cardinality $$card\{L\in \x:L\cap K\ne \emptyset\}<\infty$$ for all $K\in \cC^{(d)}.$
We equip this space with the $\sigma$-algebra $${\cal N}^d=\sigma(\{\x\in\bN^d:\,card\{K\in\x:\,K\in B\}=m\},\, B\in {\cal B}(\cC^d)\;{\rm bounded},\; m\in\N).$$  Let $\bN^d_f$ be a subsystem of $\bN^d$ consisting of finite sets.
 
A point process on  $\cC^{(d)}$ (also called particle process) is a random element $$\xi:(\Omega,\mathcal{A},\BP)\longrightarrow (\bN^d,\,{\cal N}^d),$$ its distribution $P_\xi=\BP\xi^{-1}.$
A particle process $\xi $ is called stationary if $P_{\theta_x\xi}=P_\xi$
for each $x\in\R^d$, where for any $\x \in \bN^d$ we set 
$$\theta_x\x =\{K + x\,:K\in\x\},\;K + x\ =\{y+x:y\in K\}.$$
Let $\BQ$ be a probability measure on $\cC^{(d)}$ such that
\begin{equation}\label{cen}
\BQ(\{K\in\cC^{(d)}:c(K)=0\})=1,
\end{equation} 
where $c(K)$ is the centre of the circumscribed ball $B(K)$ of $K$ and $0$ denotes the
origin in $\R^d$.
Define a measure $\lambda$ on $\cC^{(d)}$ by 
\begin{equation}\label{eq1}
\lambda (B)=\int_{\cC^{(d)}}\int_{\R^d}{\bf 1}_{[K+x\in B]}\,\D x \,\BQ(\D K),\; B\in{\cal B}(\cC^d),
\end{equation}
where the inner integration is with respect to the $d$-dimensional Lebesgue
measure $Leb.$ The measure $\lambda $ is invariant under shifts, i.e. $\lambda(B)=\lambda(B+x),\, x\in \R^d.$ We call $\lambda $ the reference measure and $\BQ$ the reference particle distribution.
In the following we make an assumption that
there is some $R>0$ such that

\begin{equation}\label{eq2}
\BQ(\{K\in\cC^{(d)}:B(K)\subset B(0,R)\})=1,
\end{equation} 
where $B(x,R)$ is the closed Euclidean ball with radius $R$ centered at $x\in\R^d.$ 

\subsection{Finite volume Gibbs particle process}
In Gibbs process theory we deal with an energy function as a measurable function \begin{equation}\label{ener}H:\bN^d_f\longrightarrow\R_+\cup\{+\infty\}\end{equation} which will be assumed to be invariant under shifts (stationary), i.e. $H(\x )=H(\theta_x\x),\,$ $x\in \R^d.$ It satisfies $H(\emptyset )<+\infty $ and it is hereditary, i.e. for $\x\in\bN^d_f,\;K\in\x$$$H(\x)<+\infty\implies H(\x\setminus\{K\})<+\infty.$$ A class of energy functions we will deal with is of the form\begin{equation}\label{pp}H(\x )=\sum_{\{K,L\}\subset\x}^{\neq} g(K\cap L),\;\x\in\bN^d_f,\end{equation} where the sum is over pairs of different sets, $g:\compp{d}\longrightarrow\R_+$ is called the pair potential, it is measurable, invariant under shifts such that $g(\emptyset )=0.$ 

In the following we consider a bounded set $\Lambda\subset\R^d$ with $Leb (\Lambda )>0.$ We denote $$\cC^{(d)}_\Lambda=\{ K\in \cC^{(d)};\,c(K)\in\Lambda\}.$$
Let $\bN^d_\Lambda$ be the system of finite subsets of $\cC^{(d)}_\Lambda $ equipped with the trace $\sigma $-algebra ${\cal N}^d_\Lambda .$ Further let
$$\lambda_\Lambda(B)=\int_{\cC^{(d)}}\int_\Lambda{\bf 1}_{[K+x\in B]}\D x\BQ(\D K),\; B\in{\cal B}(\comp{d}_\Lambda)$$ and $\pi_\Lambda $ be the Poisson process on $\cC^{(d)}_\Lambda$ with intensity measure $\lambda_\Lambda.$
We define a finite volume Gibbs particle process on $\Lambda $ with activity $\tau >0,$ inverse temperature $\beta\geq 0$ and energy function $H$ as a particle process with distribution $P^{\tau,\beta }_\Lambda$ on $\bN^d_\Lambda$ given by the Radon-Nikodym density $p$ with respect to $\pi_\Lambda,$ where \begin{equation}\label{den}p(\x )=\frac{1}{Z_\Lambda^{\tau,\beta }}\tau^{N_\Lambda(\x )}\exp(-\beta H(\x )),\;\x\in \bN^d_\Lambda,\end{equation} $N_\Lambda (\x )$ is the number of particles $K\in\x$ with $c(K)\in\Lambda,$ $$Z_\Lambda^{\tau,\beta }=\int_{\bN^d_\Lambda} \tau^{N_\Lambda(\x )}\exp(-\beta H(\x ))\pi_\Lambda (\D\x)$$ is the normalizing constant. 

For any bounded set $\Delta\subset\Lambda,\;Leb(\Delta )>0,\,\Delta^c$ is its complement in $\Lambda$ and for $\x\in\bN^{(d)}_\Lambda $ let $\x_\Delta=\{K\in\x;\;c(K)\in\Delta\}.$ We define $$H_\Delta(\x)=H(\x)-H(\x_{\Delta^c}).$$ The following are Dobrushin-Lanford-Ruelle (DLR) equations for $P_\Lambda^{\tau,\beta }$-a.a. $\x_{\Delta^c}$ we have \begin{equation}\label{dlr}P_\Lambda^{\tau,\beta }(\D\x_\Delta|\x_{\Delta^c})=\frac{1}{Z_\Delta^{\tau,\beta }(\x_{\Delta^c})}\tau^{N_\Delta(\x )}\exp(-\beta H_\Delta(\x ))\pi_\Delta(\D\x_\Delta ),\end{equation} where $$Z_\Delta^{\tau,\beta}(\x_{\Delta^c})=\int\tau^{N_\Delta(\x)}\e^{-\beta H_\Delta(\x)}\pi_\Delta(\D\x_\Delta).$$
The local energy $h$ of $K$ in $\x\in\bN^d_f$ is defined as $$h(K,\x)=H(\x\cup\{K\})-H(\x).$$ The Georgii-Nguyen-Zessin (GNZ) equations follow for any measurable function $f:\cC^{(d)}\times\bN^d_f\longrightarrow\R_+$\begin{equation}\label{gnz}\int_{\bN^d_f}\sum_{K\in\x}f(K,\x\setminus \{K\})P_\Lambda^{\tau,\beta }(\D\x)=\tau\int_{\bN^d_f}\int_{\cC^{(d)}_\Lambda} f(K,\x)\exp(-\beta h(K,\x))\lambda (\D K)P_\Lambda^{\tau,\beta }(\D\x). \end{equation}The GNZ equations characterize the finite volume Gibbs particle process, i.e. if any probability measure on $\bN^d_\Lambda$ satisfies (\ref{gnz}) for any $f$ as stated, then it is equal to $P_\Lambda^{\tau,\beta }.$ The function $$\lambda^*(K,\x )=\tau\exp(-\beta h(K,\x)),\;K\in\cC_\Lambda^{(d)},\,\x\in\bN_\Lambda^d$$ is called the (Papangelou) conditional intensity.

\subsection{Infinite volume Gibbs particle process}
It is verified that the results obtained for point processes in $\R^d$ in \cite{Der17} hold in the particle process case as well. Consider the sequence of windows $$\Lambda_n=[-n,n]^d\subset\R^d,$$ spaces $\cC^{(d)}_{\Lambda_n},$ intensity measures
$\lambda_n=\int\int_{\Lambda_n}{\bf 1}_{[K+x\in .]}\D x\BQ(\D K)$ (for a fixed probability measure $\BQ$ satisfying (\ref{cen}) and (\ref{eq2})), Poisson particle processes $\pi_{\Lambda_n},$ Gibbs point processes $P_{\Lambda_n}^{\tau,\beta },\, n\in\N.$ A measurable function $f:\bN^d\longrightarrow\R$ is called local if there is a bounded set $\Delta\subset\R^d$ such that for all $\x\in\bN^d$ we have $f(\x)=f(\x_\Delta ).$ The local convergence topology on the space of probability measures $P$ on $\bN^d$ is the smallest topology such that for any local and bounded function $f:\textbf{N}^{d}\longrightarrow\R$ the map $P\mapsto\int f\D P$ is continuous. Define a probability measure $\bar{P}^{\tau,\beta}_{\Lambda_n}$ such that for any $n\geq 1$ and any measurable test function $f_1:\bN^d\longrightarrow \R$ it holds
\begin{equation}\label{stat}\int_{\bN^d} f_1(\x )\bar{P}^{\tau,\beta}_{\Lambda_n}(\D\x)=(2n)^{-d}\int_{\Lambda_n}\int_{\bN^d} f_1(\theta_u\x)P^{\tau,\beta}_{\Lambda_n}(\D\x)\D u.\end{equation} It can be shown that the sequence $(\bar{P}^{\tau,\beta}_{\Lambda_n})_{n\geq 1}$ is tight for the local convergence topology (cf. \cite[Chapter 15]{G11}). We denote $P^{\tau,\beta }$ one of its cluster points. Due to the stationarization (\ref{stat}) $P^{\tau,\beta}$ is the distribution of a stationary particle process, in order to show that it satisfies the DLR and GNZ equations one needs to add an assumption. 

The energy function $H$ has a finite range $r>0$ if for every bounded set $\Delta\subset\R^d$ the energy $H_\Delta$ is a local function on $\Delta\oplus B(0,r),$ where $\oplus$ is the Minkowski sum of sets. The finite range property allows to extend the domain of $H$ and $H_\Delta$ from the space $\bN^d_f$ to $\bN^d$ and consequently to define the desired stationary Gibbs particle process.
\begin{definition}\label{df1} Let $H$ be a stationary and finite range energy function on $\bN^d$. A stationary Gibbs particle process is a particle process with distribution $P$ on $\bN^d$ invariant over shifts, such that for any bounded $\Delta\subset\R^d,\;Leb(\Delta)>0,$ for $P$-a.a. $\x_{\Delta^c}$ it holds \begin{equation}\label{dlr1}P(\D\x_{\Delta}|\x_{\Delta^c})=\frac{1}{Z_\Delta^{\tau,\beta }}\tau^{N_\Delta(\x )}\exp(-\beta H_\Delta(\x ))\pi_\Delta(\D\x_\Delta ),\end{equation}$\tau>0,\,\beta\geq 0,$ the denominator is the normalizing constant.\end{definition} For the stationary and finite range energy function the cluster point $P^{\tau,\beta }$ satisfies DLR equations (\ref{dlr1}). Also it satisfies GNZ equations for any measurable function $f:\cC^{(d)}\times\bN^d\longrightarrow\R_+:$\begin{equation}\label{gnz1}\int_{\bN^d}\sum_{K\in\x}f(K,\x\setminus \{K\})P^{\tau,\beta }(\D\x)=\int_{\bN^d}\int_{\cC^{(d)}} f(K,\x)\lambda^*(K,\x)\lambda (\D K)P^{\tau,\beta }(\D\x). \end{equation}Conversely, any measure $P$ on $\bN^d$ which satisfies (\ref{gnz1}) is a distribution of a stationary Gibbs particle process. Then given an hereditary function $\lambda^*$ on $\cC^{(d)}\times \bN^d$ there exists a stationary Gibbs particle process with $\lambda^*$ being its conditional intensity. The uniqueness issue is not investigated in this paper, see \cite{Der17} for more discussion.

In this work, we deal with the conditional intensity of the form
\begin{equation}\label{eq10}
\lambda^*(K, \x):= \tau \exp \left\lbrace - \beta\sum_{L\in\x} g(K \cap L)\right\rbrace, \quad K \in \mathcal{C}^{(d)},\;\x\in\bN^d,
\end{equation}
where $g$ is the pair potential, $\tau>0,\,\beta\geq 0.$
\begin{example}[Planar segment process]\label{ex2.4}
Denote by $S \subset \mathcal{C}^{(2)}$ the space of all segments in $\R^2,\; S_0$ be the subsystem of segments centered in the origin. Fix a reference probability measure $\BQ$ on $S_0,$ which corresponds to $\BQ_\phi\otimes\BQ_L,$ where $\BQ_\phi,\;\BQ_L$ is the reference distribution of directions, lengths of segments, respectively. Thanks to the assumption (\ref{eq2}) $\BQ_L$ has support $(0,2R].$ Set
\begin{equation}\label{eq11}
g(K) = \textbf{1} \lbrace K \neq \emptyset \rbrace, \quad K \in \mathcal{C}^{2},
\end{equation}
and using the previous construction we define the stationary Gibbs segment process $\xi$ in $\R^2$ as a stationary Gibbs particle process with conditional intensity 
$$
\lambda^*(K, \x) = \tau \exp \left\lbrace - \beta \sum_{L\in\x} \textbf{1}\lbrace K\cap L \neq \emptyset\rbrace  \right\rbrace,\quad K \in S, \x \in \textbf{N}^2. 
$$
In fact, $\lambda^*(K, \x)= \tau \e^{-\beta N_{\x} (K)}$, where $N_{\x} (K)$ denotes the number of intersections of $K$ with the segments in $\x$. It has to be mentioned that the reference distribution $\BQ$ need not coincide with the observed joint length-direction distribution of the process, cf. \cite{BVP}.
\end{example}

\section{Generalization of some asymptotic results for Gibbs particle processes}
Our aim is to extend the result \cite[Theorem 5.3]{Tor17} concerning estimates of the bound of the Wasserstein distance between standard Gaussian random variable and functionals of stationary Gibbs point processes in $\R^d$ given by conditional intensity. To do so we will use the general bound given by \cite[Corrolary 3.5]{Tor17} that is formulated for wider class of point processes having conditional intensity. We consider the space $\cC^{(d)}$ of compact sets, conditional intensity (\ref{eq10}) and a stationary Gibbs particle process $\mu $ from Definition \ref{df1}, satisfying (\ref{gnz1}). Behind the presented model there is a probability measure $\BQ$ on $\cC^{(d)}$ satisfying (\ref{cen}) and (\ref{eq2}), defining the reference measure $\lambda $ in (\ref{eq1}). In the following we always mean that a stationary Gibbs particle process has activity $\tau ,$ inverse temperature $\beta ,$ pair potential $g$ and particle distribution $\BQ .$ Thanks to (\ref{eq2}), (\ref{pp}) and assumptions laid on $g$ the finite range property holds.

\subsection{Bounds on Wasserstein distance for functionals of Gibbs particle processes}

The mean value $\E[\lambda^*(K,\mu )],\; K\in\cC^{(d)},$ is called a correlation function.
Sharp lower and upper bound for the correlation function of a Gibbs point process on $\R^d$ can be found in \cite{Stucki14_2}. For our purpose the following simple bounds for the correlation function of a stationary Gibbs particle process are sufficient. 
\begin{lemma}\label{lemma3.1}
Let $\mu$ be a stationary Gibbs particle process given by the conditional intensity of the form \eqref{eq10} with activity $\tau >0$, inverse temperature $\beta \geq 0$, reference particle distribution $\mathbb{Q}$ satisfying \eqref{eq2}, and with pair potential $g$ which is bounded from above by some positive constant $a$. Then there 
exists $b\in [0,\infty )$ such that it holds
\begin{equation}\label{eq13}
\tau (1-\beta b) \leq \mathbb{E}[\lambda^*(K,\mu)] \leq \tau
\end{equation}
for $\lambda$-a.a. $K \in \comp{d}$.
\end{lemma}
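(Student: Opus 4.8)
My plan is to handle the two inequalities separately: the upper bound is immediate, while the lower bound calls for a linearisation of the exponential followed by a first-moment computation via the GNZ equations \eqref{gnz1} and a uniform integral-geometric estimate.

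For the upper bound I would simply observe that $g$ takes values in $\R_+$ and $\beta\geq 0$, so the exponent $-\beta\sum_{L\in\x}g(K\cap L)$ is nonpositive for every configuration; hence $\lambda^*(K,\x)\leq\tau$ pointwise, and taking expectations gives $\E[\lambda^*(K,\mu)]\leq\tau$ for every $K$. For the lower bound I would use $e^{-t}\geq 1-t$ with $t=\beta\sum_{L\in\mu}g(K\cap L)\geq 0$, obtaining
$$\lambda^*(K,\mu)\ \ge\ \tau\Bigl(1-\beta\sum_{L\in\mu}g(K\cap L)\Bigr),$$
and therefore, after taking expectations,
$$\E[\lambda^*(K,\mu)]\ \ge\ \tau\Bigl(1-\beta\,\E\Bigl[\sum_{L\in\mu}g(K\cap L)\Bigr]\Bigr).$$
It then suffices to produce a finite constant $b$, independent of $K$, such that $\E\bigl[\sum_{L\in\mu}g(K\cap L)\bigr]\leq b$ for $\lambda$-a.a. $K$.

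To control this first moment I would apply the GNZ equation \eqref{gnz1} to the function $f(L,\x)=g(K\cap L)$, with $K$ fixed and $f$ not depending on its configuration argument, which yields
$$\E\Bigl[\sum_{L\in\mu}g(K\cap L)\Bigr]=\int_{\bN^d}\int_{\cC^{(d)}}g(K\cap L)\,\lambda^*(L,\x)\,\lambda(\D L)\,P^{\tau,\beta}(\D\x).$$
Bounding the inner conditional intensity by $\tau$, exactly as in the upper bound, and using that $P^{\tau,\beta}$ is a probability measure removes the configuration integral and leaves $\E\bigl[\sum_{L\in\mu}g(K\cap L)\bigr]\leq\tau\int_{\cC^{(d)}}g(K\cap L)\,\lambda(\D L)$.

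The last and, I expect, the only delicate step is the integral-geometric bound that must be uniform in $K$. Since $0\leq g\leq a$ and $g(\emptyset)=0$, we have $g(K\cap L)\leq a\,\mathbf{1}\{K\cap L\neq\emptyset\}$, and unfolding the definition \eqref{eq1} of $\lambda$ gives
$$\int_{\cC^{(d)}}g(K\cap L)\,\lambda(\D L)\ \le\ a\int_{\cC^{(d)}}\int_{\R^d}\mathbf{1}\{K\cap (M+x)\neq\emptyset\}\,\D x\,\BQ(\D M).$$
For fixed $K,M$ the inner integral equals $Leb$ of $\{x:K\cap(M+x)\neq\emptyset\}=K\oplus(-M)$. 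Here assumption \eqref{eq2} together with \eqref{cen} forces $M\subset B(0,R)$ for $\BQ$-a.a. $M$, while for $\lambda$-a.a. $K$ one has $K=M'+x\subset B(c(K),R)$; consequently $K\oplus(-M)\subset B(c(K),2R)$ and the inner integral is at most $Leb(B(0,2R))$. As $\BQ$ is a probability measure this gives the desired uniform bound, and one may take $b=\tau a\,Leb(B(0,2R))\in[0,\infty)$. The crux is precisely this uniformity in $K$: it rests entirely on the boundedness of circumscribed balls in \eqref{eq2}, which confines both $K$ and $M$ to balls of radius $R$ and thereby caps the volume of translations for which the two particles can meet.
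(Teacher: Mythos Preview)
Your argument is correct. The upper bound and the linearisation $\e^{-t}\ge 1-t$ match the paper exactly; the difference lies in how you control $\E\bigl[\sum_{L\in\mu}g(K\cap L)\bigr]$. The paper invokes the Campbell theorem for the stationary process $\mu$, writing this expectation as $\int g(K\cap L)\,\theta(\D L)$ with $\theta$ the intensity measure of $\mu$, then disintegrates $\theta$ against the \emph{observed} particle distribution $\BQ_1$ and appeals to the inclusion $\operatorname{supp}\BQ_1\subset\operatorname{supp}\BQ$ (quoted from \cite{BVP}) to transfer the size constraint \eqref{eq2}. You instead route the first moment through the GNZ equation \eqref{gnz1} and the crude bound $\lambda^*\le\tau$, which lands you directly on an integral against the \emph{reference} measure $\lambda$ and hence against $\BQ$ itself. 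Your path is arguably cleaner: it avoids any discussion of the actual intensity and particle distribution of $\mu$, and the support inclusion is not needed. The price is an extra factor $\tau$ in the constant, $b=\tau a\,(2R)^d\omega_d$ versus the paper's $b=a\,(2R)^d\omega_d$, which is immaterial for the statement.
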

\begin{proof}
The stationary process $\mu $ has some intensity measure $\theta $ and particle distribution $\BQ_1$ (typically not equal to $\BQ$). Using the Campbell theorem and the disintegration (\cite{SW08}) we obtain
\begin{align*}
\mathbb{E}[\lambda^*(K,\mu)] & = \tau\mathbb{E}[\exp\lbrace -\beta \sum_{L \in \mu} g(K \cap L)\rbrace] \geq \tau \left(1- \beta \mathbb{E} \sum_{L \in \mu} g(K \cap L)\right)\\
& = \tau \left(1- \beta \int_{\comp{d}} g(K \cap L) \theta(\D L)\right)\\
& = \tau \left(1- \beta \int_{\comp{d}} \int_{\R^d} g(K \cap (L+x))\,\D x \,\mathbb{Q}_1(\D L)\right)\\
& \geq \tau \left(1- \beta a\int_{\comp{d}} Leb (K\oplus\check{L}) \BQ_1(\D L)\right)\\
& \geq \tau (1-\beta b),
\end{align*} where we denote $\check{L} = \lbrace -l : \ l \in L \rbrace$. The support ${\rm supp}\BQ_1\subset {\rm supp}\BQ,$ cf. \cite{BVP}, therefore using (\ref{eq2}) we can choose $b= a (2R)^d \omega_d$, where $\omega_d$ is the volume of the unit ball in $\R^d$.
The upper bound follows from \eqref{eq10}. 
\end{proof}

\begin{definition}
We define the innovation of a Gibbs particle process $\mu$ as a random variable
$$I_{\mu} (\varphi) = \sum_{K \in \mu} \varphi(K, \mu \setminus \{K\}) - \int_{\comp{d}}\varphi(K,\mu)\lambda^*(K,\mu)\lambda(\D K)$$
for any measurable $\varphi: \comp{d} \times \textbf{N}^d \to \R$, for which $|I_{\textbf{x}}(\varphi)|< \infty$ for $\mu$-a.a. $\textbf{x} \in \textbf{N}^d$.
\end{definition}

We are interested in estimates of the Wasserstein distance $d_W$, cf. \cite{Tor17} between an innovation $I_{\mu}$ and a standard Gaussian random variable $Z$. 

\begin{theorem}\label{th3.4}
Let $\mu$ be a stationary Gibbs particle process given by the conditional intensity of the form \eqref{eq10} with activity $\tau >0$, inverse temperature $\beta \geq 0$, reference particle distribution $\mathbb{Q}$ satisfying \eqref{eq2}, and with pair potential $g$ which is bounded from above by some positive constant $a$. Let $\varphi: \mathcal{C}^{(d)} \to \R$ be a measurable function that does not depend on $\textbf{x} \in \textbf{N}^d$  and
$$\varphi \in L^1(\comp{d}, \lambda) \cap L^2(\comp{d},\lambda).$$
Then
\begin{align*}
d_W(I_{\mu}(\varphi), Z) \leq & \sqrt{\frac{2}{\pi}}\sqrt{1-2 \tau (1-\beta b) ||\varphi||^2 _{L^2(\comp{d}, \lambda)}+ \tau^2 ||\varphi||^4 _{L^2(\comp{d}, \lambda)}} \\
& + \tau ||\varphi||^3 _{L^3(\comp{d}, \lambda)} + \sqrt{\frac{2}{\pi}} \tau^2 ||\varphi||^2 _{L^1(\comp{d}, \lambda)} |1-\e ^{-\beta a}| \\
& +  2 \tau^2 ||\varphi||^2 _{L^{2}(\comp{d}, \lambda)} ||\varphi|| _{L^1(\comp{d}, \lambda)} |1-\e ^{-\beta a}| \\
& + \tau^3||\varphi||^3 _{L^1(\comp{d}, \lambda)} |1-\e ^{-\beta a}|^2.
\end{align*}

\end{theorem}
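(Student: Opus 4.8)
The plan is to set $F:=I_\mu(\varphi)$ and to insert it into the general Wasserstein bound of \cite[Corollary 3.5]{Tor17}, which is formulated on an arbitrary Polish space and hence applies to the configuration space over $\comp{d}$. First I would check that $F$ is centred and square-integrable: choosing the test function $f(K,\x)=\varphi(K)$ in the GNZ equation \eqref{gnz1} gives $\mathbb{E}\sum_{K\in\mu}\varphi(K)=\mathbb{E}\int_{\comp{d}}\varphi(K)\lambda^*(K,\mu)\lambda(\D K)$, so that $\mathbb{E}[F]=0$; the hypotheses $\varphi\in L^1(\comp{d},\lambda)\cap L^2(\comp{d},\lambda)$ then guarantee that both terms defining $F$ are finite and that $F$ has finite variance, so that the bound is applicable.

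The decisive ingredient is the add-one-cost operator $D_LF=I_{\mu\cup\{L\}}(\varphi)-I_\mu(\varphi)$. Because the conditional intensity \eqref{eq10} has a product structure, inserting a particle $L$ multiplies $\lambda^*(K,\mu)$ by the factor $\e^{-\beta g(K\cap L)}$, and a short computation yields
$$D_LF=\varphi(L)-\int_{\comp{d}}\varphi(K)\,\lambda^*(K,\mu)\bigl(\e^{-\beta g(K\cap L)}-1\bigr)\,\lambda(\D K)=:\varphi(L)+R_L,$$
where $R_L$ is the purely Gibbsian correction that is absent in the Poisson case. The elementary estimate driving everything is that, since $0\le g\le a$ and $\beta\ge 0$, one has $\bigl|\e^{-\beta g(K\cap L)}-1\bigr|\le 1-\e^{-\beta a}=|1-\e^{-\beta a}|$; combined with the upper bound $\lambda^*\le\tau$ from Lemma \ref{lemma3.1} this gives the uniform control $|R_L|\le\tau\,|1-\e^{-\beta a}|\,\|\varphi\|_{L^1(\comp{d},\lambda)}$.

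Next I would substitute $D_LF=\varphi(L)+R_L$ into the two groups of terms produced by \cite[Corollary 3.5]{Tor17}. The first group is the Stein first-order term, carrying the Wasserstein Stein factor $\sqrt{2/\pi}$, which controls the deviation from $1$ of the expected weighted gradient energy $\mathbb{E}\int_{\comp{d}}(D_LF)^2\lambda^*(L,\mu)\lambda(\D L)$. Retaining only the main part $\varphi(L)^2$ and using Jensen's inequality one is led to $1-2\,\mathbb{E}\!\int\varphi^2\lambda^*\lambda+\mathbb{E}\bigl(\int\varphi^2\lambda^*\lambda\bigr)^2$, where the lower bound $\mathbb{E}[\lambda^*]\ge\tau(1-\beta b)$ of Lemma \ref{lemma3.1} bounds the linear term and the upper bound $\lambda^*\le\tau$ the quadratic one; this produces the first summand. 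The cross and $R_L$ contributions in the same group, estimated through $|R_L|$, give the third summand $\sqrt{2/\pi}\,\tau^2\|\varphi\|^2_{L^1}\,|1-\e^{-\beta a}|$. The second group is the third-moment remainder $\int_{\comp{d}}\mathbb{E}\bigl[|D_LF|^3\bigr]\lambda^*(L,\mu)\lambda(\D L)$; expanding $|D_LF|^3$ about $|\varphi(L)|^3$ and weighting by $\lambda^*\le\tau$ gives, through Hölder's inequality and the bound on $|R_L|$, the pure term $\tau\|\varphi\|^3_{L^3}$ (second summand), the mixed term $2\tau^2\|\varphi\|^2_{L^2}\|\varphi\|_{L^1}\,|1-\e^{-\beta a}|$ (fourth summand) and the pure correction term $\tau^3\|\varphi\|^3_{L^1}\,|1-\e^{-\beta a}|^2$ (fifth summand). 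Adding the five contributions yields the asserted inequality.

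The step I expect to be the genuine obstacle is twofold. First, one must verify that the abstract Malliavin--Stein machinery of \cite{Tor17} --- the difference operator $D$, the inverse Ornstein--Uhlenbeck generator $L^{-1}$ and the covariance identities behind Corollary 3.5 --- transfers from point processes on $\R^d$ to point processes on the Polish particle space $\comp{d}$; this rests on the finite range property, which holds here by \eqref{eq2}, \eqref{pp} and the assumptions on $g$. Second, one must carry the Gibbsian correction $R_L$ --- which, unlike in the Poisson case, couples the gradient at $L$ to the whole configuration through all particles meeting $L$ --- through every quadratic and cubic term while keeping each integral convergent; it is precisely the uniform estimate $|R_L|\le\tau|1-\e^{-\beta a}|\|\varphi\|_{L^1}$ that prevents the $L$-integrals from diverging. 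Once these two points are settled, the remainder is routine Hölder and Lemma \ref{lemma3.1} bookkeeping.
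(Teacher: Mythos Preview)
Your overall strategy and the key estimates match the paper: apply \cite[Corollary~3.5]{Tor17} on the Polish space $\comp{d}$, use Lemma~\ref{lemma3.1} for $\tau(1-\beta b)\le\mathbb E[\lambda^*]\le\tau$, and control the interaction through $|\e^{-\beta g}-1|\le|1-\e^{-\beta a}|$. Where you diverge from the paper is in how the corollary is unpacked. Corollary~3.5 is not stated as a bound in the second and third moments of $D_LF$; it is already specialised to innovations and displays five separate integrals written directly in $\varphi$ and in the add-one cost $D_K\lambda^*(L,\cdot)$ of the \emph{conditional intensity}. Accordingly the paper never forms $D_LF$ or your remainder $R_L$: it computes
\[
D_K\lambda^*(L,\x)=\lambda^*(L,\x)\bigl(\e^{-\beta g(L\cap K)}-1\bigr),
\]
sets $\alpha_2(K,L,\mu)=\mathbb E[\lambda^*(K,\mu)\lambda^*(L,\mu)]\le\tau^2$ and $\alpha_3\le\tau^3$, and then bounds each of the five pre-given terms in a single line.

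Your $R_L$ is precisely $-\int\varphi(K)\,D_L\lambda^*(K,\mu)\,\lambda(\D K)$, so the algebraic content is the same; but the route through an expansion of $(D_LF)^2$ and $|D_LF|^3$ does not reproduce the stated inequality as written. The cubic expansion $(|\varphi(L)|+|R_L|)^3$ produces a factor $3$, not the factor $2$ in the fourth summand, and the uniform estimate $|R_L|\le\tau|1-\e^{-\beta a}|\|\varphi\|_{L^1}$ is too coarse: the $R_L^2$ contribution, once integrated against $\lambda^*(L,\mu)\,\lambda(\D L)$ over the infinite-measure space $\comp{d}$, diverges. The paper's term-by-term treatment keeps the $L$-dependence of $\e^{-\beta g(K\cap L)}-1$ inside the integrals (it is what makes terms three through five finite and gives the exact constants), so the fix is simply to work with $D_K\lambda^*$ directly rather than wrapping it into $R_L$.
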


\begin{proof}
First note that in this setting the finite range property holds. We would like to estimate individually terms of the bound in Corollary $3.5$ in \cite{Tor17} (valid on a Polish space). First of all, we need to verify the assumptions of this result. By using the upper bound from Lemma \ref{lemma3.1} and the integrability assumptions on $\varphi$, we can write
$$\int_{\comp{d}} |\varphi(K)| \mathbb{E}[\lambda^*(K,\mu) ] \lambda(\D K) \leq \tau||\varphi||_{L^1(\comp{d}, \lambda)} < \infty $$
and
$$ \int_{\comp{d}} |\varphi(K)|^2 \mathbb{E}\left[   \lambda^*(K,\mu) \right] \lambda(\D K)\leq \tau ||\varphi||^2 _{L^2(\comp{d}, \lambda)}  < \infty$$
and hence, the assumptions are verified.

For simplicity, denote
$$\alpha_2(K,L,\mu) := \mathbb{E}[ \lambda^*(K,\mu) \lambda^*(L,\mu)],$$
$$\alpha_3(K,L,M,\mu) := \mathbb{E}[ \lambda^*(K,\mu) \lambda^*(L,\mu)\lambda^*(M,\mu)]$$
for $K, L, M \in \comp{d}$. Then again based on Lemma \ref{lemma3.1}, we can estimate $\alpha_2$ and $\alpha_3$ as

\begin{align}
\label{eq16}
\begin{split}
 \alpha_2(K, L, \mu) &  \leq \tau^2 ,
\\
 \alpha_3(K, L, M, \mu)  & \leq \tau^3,
\end{split}
\end{align}
for $\lambda$-a.a. $K, L, M \in \comp{d}.$

If we investigate the term $D_K \lambda^*(L, \textbf{x})$ in Corollary $3.5$ in \cite{Tor17}, we obtain
\begin{align*}
& D_K \lambda^*(L, \textbf{x})=\lambda^*(L,\textbf{x}\cup\{K\})-\lambda^*(L,\x)=\\
& = \tau \exp \left\lbrace - \beta\sum_{M\in\x\cup\{K\}} g(L \cap M) \right\rbrace  - \tau \exp \left\lbrace - \beta\sum_{M\in\x}g(L \cap M)\right\rbrace\\
& = \tau \exp \left\lbrace - \beta\sum_{M\in\x} g(L \cap M) \right\rbrace \left(\e^{-\beta g(L \cap K)} -1 \right) \\
& = \lambda^*(L, \textbf{x})\left(\e^{-\beta g(L \cap K)} -1 \right) . 
\end{align*}

Using this expression, we can compute the individual terms. In the first term, we will use estimates \eqref{eq13} and \eqref{eq16} to obtain the bound
\begin{align*}
& \sqrt{\frac{2}{\pi}} \sqrt{1-2 \int_{\comp{d}} |\varphi(K)|^2 \mathbb{E}[  \lambda^*(K,\mu)] \lambda(\D K) + \int_{(\comp{d})^2} |\varphi(K)\varphi(L)|^2 \alpha_2(K,L,\mu) \lambda(\D K)\lambda(\D L) } \\
& \leq \sqrt{\frac{2}{\pi}} \sqrt{1-2 \tau (1-\beta b) \int_{\comp{d}} |\varphi(K)|^2 \lambda(\D K) + \tau^2 \int_{(\comp{d})^2} |\varphi(K)\varphi(L)|^2 \lambda(\D K)\lambda(\D L) }\\
& \leq \sqrt{\frac{2}{\pi}}\sqrt{1-2 \tau (1-\beta b)||\varphi||^2 _{L^2(\comp{d}, \lambda)}+  \tau^2||\varphi||^4 _{L^2(\comp{d}, \lambda)}}.
\end{align*}
The second term can be estimated analogously:
$$ \int_{\comp{d}} |\varphi(K)|^3 \mathbb{E}[  \lambda^*(K,\mu)] \lambda(\D K) \leq \tau \int_{\comp{d}} |\varphi(K)|^3 \lambda(\D K) \leq \tau ||\varphi||^3 _{L^3(\comp{d}, \lambda)}.  $$
In the following terms, we will use additionally the boundedness of the pair potential $g$. Thus,
\begin{align*}
& \sqrt{\frac{2}{\pi}} \int_{(\comp{d})^2} |\varphi(K)\varphi(L)| \mathbb{E} [| D_K \lambda^*(L, \mu)| \lambda^*(K, \mu)] \lambda(\D K)\lambda(\D L)\\
& = \sqrt{\frac{2}{\pi}} \int_{(\comp{d})^2} |\varphi(K)\varphi(L)| | 1-\e^{-\beta g(K \cap L)}|\alpha_2(K,L,\mu) \lambda(\D K)\lambda(\D L)\\
& \leq \tau^2  |1-\e^{-\beta a}|\sqrt{\frac{2}{\pi}} \int_{(\comp{d})^2} |\varphi(K)\varphi(L)| \lambda(\D K)\lambda(\D L)\\
& = \tau^2 |1-\e^{-\beta a}| \sqrt{\frac{2}{\pi}} ||\varphi||^2 _{L^1(\comp{d}, \lambda)} 
\end{align*}
and
\begin{align*}
& 2 \int_{(\comp{d})^2} |\varphi(K)|^2|\varphi(L)| \mathbb{E} [| D_K \lambda^*(L, \mu)| \lambda^*(K, \mu)] \lambda(\D K)\lambda(\D L)\\
 & = 2 \int_{(\comp{d})^2} |\varphi(K)|^2|\varphi(L)| | 1-\e^{-\beta g(K \cap L)}|\alpha_2(K,L,\mu)  \lambda(\D K)\lambda(\D L)\\
& \leq 2 \tau^2 | 1-\e^{-\beta a}| \int_{(\comp{d})^2} |\varphi(K)|^{2}|\varphi(L)| \lambda(\D K)\lambda(\D L) \\
& = 2 \tau^2 | 1-\e^{-\beta a}| ||\varphi||^2 _{L^{2}(\comp{d}, \lambda)} ||\varphi|| _{L^1(\comp{d}, \lambda)}.
\end{align*}
and
\begin{align*}
&  \int_{(\comp{d})^3} |\varphi(K)\varphi(L)\varphi(M)| \mathbb{E} [| D_K \lambda^*(L, \mu) D_K \lambda^*(M, \mu)| \lambda^*(K, \mu)] \lambda(\D K)\lambda(\D L)\lambda(\D M)\\
&= \int_{(\comp{d})^3} |\varphi(K)\varphi(L)\varphi(M)| | 1-\e^{-\beta g(K \cap L)}|| 1-\e^{-\beta g(K \cap M)}|\alpha_3(K,L,M,\mu) \lambda(\D K)\lambda(\D L)\lambda(\D M)\\
& \leq \tau^3 | 1-\e^{-\beta a}|^2  \int_{(\comp{d})^3} |\varphi(K)\varphi(L)\varphi(M)| \lambda(\D K)\lambda(\D L)\lambda(\D M) \\
&= \tau^3 | 1-\e^{-\beta a}|^2 ||\varphi||^3 _{L^1(\comp{d}, \lambda)}.\\
\end{align*}
Adding these estimates together yields the theorem.
\end{proof}

\subsection{Gaussian approximation for a functional of a stationary Gibbs planar segment process}
As an example of an application of Theorem \ref{th3.4}, we will derive a Gaussian approximation for an innovation of a stationary Gibbs planar segment process defined in Example \ref{ex2.4}. 
Two functionals are investigated:
 the normalized number of segments observed in a window and normalized total length of segments hitting the window. We take windows forming a convex averageing sequence (cf. \cite{DVJ03}), i.e. monotone increasing sequence of convex bounded Borel sets converging to $\R^2$.

\begin{theorem}\label{th3.6}
Consider for each $n \in \N$ a stationary Gibbs planar segment process $\xi^{(n)}$ with the conditional intensity
$$
\lambda^* _n(K, \x) = \tau_n \exp \left\lbrace - \beta_n \sum_{L\in\x} \textbf{1}\lbrace K\cap L \neq \emptyset\rbrace  \right\rbrace,\quad K \in S, \x \in \textbf{N}^2,
$$
where $\tau_n >0$ and $\beta_n\geq 0$. Moreover, suppose that $\beta_n\rightarrow 0$ and $0 < c_1 < \tau_n <c_2 <\infty, \ n \in \N,$ for some constants $c_1,c_2$ and that the common reference particle distribution $\BQ $ for all $\xi^{(n)}$ has a uniform directional distribution. Let $\lbrace W_n, \ n \in \N\rbrace$ be a convex averaging sequence in $\R^2$ such that $Leb(W_n)= O(\beta_n ^{-1})$ (i.e. the asymptotic order of the growth of $Leb(W_n)$ is at most $\beta_n ^{-1}$). For $n \in \N$ and $K \in S$, define 
$$\varphi_n(K)=\frac{1}{\sqrt{\tau_n Leb(W_n)}}\cdot \textbf{1}\lbrace K\cap W_n\neq\emptyset\rbrace.$$ 
Further
$$\psi_n(K)= \frac{l(K)}{\sqrt{\mathbb{E}_L l^2}} \varphi_n(K),$$
where $l(K)$ denotes the length of the segment $K$, $l$ is a random variable that follows the law of $\mathbb{Q}_L$ and $\mathbb{E}_L$ denotes the expectation with respect to $\mathbb{Q}_L$.
Then
$$d_W(I_{\xi^{(n)}}(\varphi_n), Z) \to 0, \qquad d_W(I_{\xi^{(n)}}(\psi_n), Z) \to 0$$
as $n \to \infty $, where $Z$ is a standard Gaussian random variable.
\end{theorem}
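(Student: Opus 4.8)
The plan is to apply Theorem~\ref{th3.4} separately to $\varphi_n$ and to $\psi_n$ for each process $\xi^{(n)}$, and then to show that every one of the five summands in the resulting bound tends to $0$. For the segment potential $g(K)=\mathbf 1\{K\neq\emptyset\}$ of Example~\ref{ex2.4} one has $a=1$, so the factor $|1-\e^{-\beta_n a}|=|1-\e^{-\beta_n}|\sim\beta_n$, and the constant from Lemma~\ref{lemma3.1} is the fixed number $b=(2R)^2\omega_2=4\pi R^2$; since $\beta_n\to 0$ both $\beta_n b$ and $|1-\e^{-\beta_n}|$ vanish. The whole question thus reduces to the asymptotics of the norms $\|\varphi_n\|_{L^1},\|\varphi_n\|_{L^2},\|\varphi_n\|_{L^3}$ (and their $\psi_n$-analogues) against the reference measure $\lambda$.

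The technical heart is an integral-geometric evaluation of $M_n^{(p)}=\int_{\comp{2}} l(K)^p\,\mathbf 1\{K\cap W_n\neq\emptyset\}\,\lambda(\D K)$. Using the representation \eqref{eq1} I would integrate first over the centre: for a centred segment $K_0\in S_0$ of length $l$ and direction $\phi$, the set $\{x:(K_0+x)\cap W_n\neq\emptyset\}$ equals $W_n\oplus\check K_0$, whose area is given by the Steiner-type identity $Leb(W_n\oplus\check K_0)=Leb(W_n)+l\,b_\phi(W_n)$, where $b_\phi(W_n)$ is the width of $W_n$ perpendicular to $\phi$. Averaging over the uniform directional law by Cauchy's formula $\tfrac1\pi\int_0^\pi b_\phi(W_n)\,\D\phi=\tfrac1\pi\,\mathrm{per}(W_n)$ and over $\BQ_L$ yields
$$M_n^{(p)}=\mathbb E_L[l^p]\,Leb(W_n)+\tfrac1\pi\,\mathbb E_L[l^{p+1}]\,\mathrm{per}(W_n).$$
Since $\{W_n\}$ is a convex averaging sequence its inradius tends to infinity, so $\mathrm{per}(W_n)/Leb(W_n)\to 0$ and the boundary term is of lower order; all moments $\mathbb E_L[l^p]$ are finite and positive because $\mathrm{supp}\,\BQ_L\subset(0,2R]$. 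This gives $\|\varphi_n\|_{L^2}^2\sim 1/\tau_n$, $\|\varphi_n\|_{L^1}\sim\sqrt{Leb(W_n)/\tau_n}$, $\|\varphi_n\|_{L^3}^3\sim 1/(\tau_n^{3/2}\sqrt{Leb(W_n)})$, and the identical orders for $\psi_n$ up to the harmless constants $\mathbb E_L[l^p]/(\mathbb E_L l^2)^{p/2}$.

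Substituting into Theorem~\ref{th3.4}, I would check the summands in turn. Writing $A_n=\tau_n\|\varphi_n\|_{L^2}^2$, the first summand equals $\sqrt{2/\pi}\,\sqrt{(1-A_n)^2+2A_n\beta_n b}$, which vanishes because $A_n\to 1$ (its defect $1-A_n$ is of order $\mathrm{per}(W_n)/Leb(W_n)$) and $\beta_n\to 0$. The second summand is $\tau_n\|\varphi_n\|_{L^3}^3\sim(\tau_n Leb(W_n))^{-1/2}\to 0$, using $\tau_n\geq c_1$ and $Leb(W_n)\to\infty$. The last three summands carry $\tau_n^2\|\varphi_n\|_{L^1}^2\sim\tau_n Leb(W_n)$, $\tau_n^2\|\varphi_n\|_{L^2}^2\|\varphi_n\|_{L^1}\sim\sqrt{\tau_n}\sqrt{\tau_n Leb(W_n)}$ and $\tau_n^3\|\varphi_n\|_{L^1}^3\sim(\tau_n Leb(W_n))^{3/2}$, multiplied respectively by $|1-\e^{-\beta_n}|$, $|1-\e^{-\beta_n}|$ and $|1-\e^{-\beta_n}|^2$; with $|1-\e^{-\beta_n}|\sim\beta_n$ and $\tau_n\in(c_1,c_2)$ these are of orders $\beta_n Leb(W_n)$, $\sqrt{\beta_n}\,\sqrt{\beta_n Leb(W_n)}$ and $\beta_n^{1/2}(\beta_n Leb(W_n))^{3/2}$.

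I expect the binding point to be the third summand, of order $\tau_n\beta_n Leb(W_n)$: this is the single place where the window growth is genuinely critical, and it is precisely what the coupling $Leb(W_n)=O(\beta_n^{-1})$ is designed to control, so that $\beta_n Leb(W_n)$ stays bounded while the surplus powers of $\beta_n$ in the fourth and fifth summands send those to $0$. The driving of this third term to zero (for which $\beta_n Leb(W_n)\to 0$ is the natural sufficient condition) is the delicate step on which $d_W\to 0$ ultimately rests. The other genuinely technical part is the Steiner--Cauchy identity for $M_n^{(p)}$ together with the verification that the perimeter corrections are of strictly smaller order, so that $A_n\to 1$ and the leading constants cancel cleanly; the same computation, with the length-moment constants inserted, disposes of $\psi_n$.
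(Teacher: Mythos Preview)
Your approach is exactly the paper's: apply Theorem~\ref{th3.4} with $a=1$ and $b=4\pi R^2$, evaluate $\|\psi_n\|_{L^\alpha}$ via the Steiner--Cauchy identity to get $M_n^{(p)}=\mathbb E_L[l^{p}]\,Leb(W_n)+\pi^{-1}\mathbb E_L[l^{p+1}]\,U(W_n)$, and then run through the five summands using $U(W_n)/Leb(W_n)\to 0$ together with the growth hypothesis on $Leb(W_n)$. Your flag on the third summand is the one substantive remark: under the literal hypothesis $Leb(W_n)=O(\beta_n^{-1})$ that term is merely bounded, and the paper's own proof simply asserts convergence at that step without further comment; reading the coupling as $\beta_n Leb(W_n)\to 0$ (little-$o$) resolves it exactly as you indicate.
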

\begin{proof}
We prove the theorem for the functions $\psi_n$. The result for $\varphi_n$ is then a special case taking $l(K) = 1,\ K \in S,$ and $\mathbb{E}_L l^2=1$.
We want to use Theorem \ref{th3.4} for $S \subset \comp{2}$. First, we have to verify the assumptions. In Lemma \ref{lemma3.1}, we can set $b = 4\pi R^2$ and $a=1$. Further, for every $n \in \N$,
\begin{align*}
\int_{\comp{2}} |\psi_n(K)|& \lambda(\D K)  = \int_{\comp{2}} l(K) \frac{\textbf{1}\lbrace K \cap W_n \neq \emptyset \rbrace}{\sqrt{\tau_n Leb(W_n)\mathbb{E}_L l^2}} \lambda(\D K)\\
& = \frac{1}{\sqrt{\tau_n Leb(W_n)\mathbb{E}_L l^2}} \int_{S_0}  \int_{\R^2} l(K + x) \textbf{1}\lbrace (K+ x) \cap W_n \neq \emptyset \rbrace \D x \mathbb{Q}(\D K) \\
& = \frac{1}{\sqrt{\tau_n Leb(W_n)\mathbb{E}_L l^2}} \int_{S_0} l(K) \int_{\R^2}  \textbf{1}\lbrace (K + x) \cap W_n \neq \emptyset \rbrace \D x \mathbb{Q}(\D K) \\
& = \frac{1}{\sqrt{\tau_n Leb(W_n)\mathbb{E}_L l^2}} \int_{S_0} l(K) Leb(\check{K} \oplus W_n) \mathbb{Q}(\D K) < \infty,
\end{align*}
since $W_n$ is bounded and $K$ is the segment of the length less than or equal to $2R$. Similarly,
\begin{align*}
\int_{\comp{2}} |\psi_n(K)|^2 \lambda(\D K) & = \int_{\comp{2}}l(K)^2 \frac{\textbf{1}\lbrace K \cap W_n \neq \emptyset \rbrace}{\tau_n Leb(W_n)\mathbb{E}_L l^2} \lambda(\D K)\\
& = \frac{1}{\tau_n Leb(W_n)\mathbb{E}_L l^2} \int_{S_0} l(K)^2 \int_{\R^2} \textbf{1}\lbrace (K + x) \cap W_n \neq \emptyset \rbrace \D x \mathbb{Q}(\D K) \\
& = \frac{1}{\tau_n Leb(W_n)\mathbb{E}_L l^2} \int_{S_0} l(K)^2 Leb(\check{K} \oplus W_n) \mathbb{Q}(\D K) < \infty.
\end{align*}
Hence, the assumptions of Theorem \ref{th3.4} are satisfied and so we can compute the explicit bounds on the Wasserstein distance between a Gaussian random variable $Z$ and the innovation $I_{\xi^{(n)}}(\psi_n)$ for each $n \in \N$.

Take some fixed $n \in \N$ and $\alpha > 1$. Using definition of the measure $\lambda$ and Steiner theorem (cf. \cite{Se82}), we obtain
\begin{align*}
||\psi_n|| _{L^{\alpha}(\comp{2}, \lambda)} & =
\left( \int_{\comp{2}} \abs{l(K) \frac{\textbf{1}\lbrace K \cap W_n \neq \emptyset \rbrace}{\sqrt{\tau_n  Leb(W_n)\mathbb{E}_L l^2}}}^{\alpha} \lambda(\D K)\right) ^{\frac{1}{\alpha}} \\
& = \frac{1}{\sqrt{\tau_n Leb(W_n)\mathbb{E}_L l^2}}\left(  \int_{S_0} l(K)^{\alpha} Leb(\check{K} \oplus W_n) \mathbb{Q}(\D K) \right)^{\frac{1}{\alpha}}\\
& = \frac{1}{\sqrt{\tau_n Leb(W_n)\mathbb{E}_L l^2}}\left( \int_0 ^{2R} \int_{\mathbb{S}^1} r^{\alpha} Leb(\check{K} \oplus W_n) \mathbb{Q}_{\phi}(\D \phi) \mathbb{Q}_L (\D r)\right)^{\frac{1}{\alpha}}\\
& = \frac{1}{\sqrt{\tau_n Leb(W_n)\mathbb{E}_L l^2}}\left( \int_0 ^{2R} r^{\alpha} \left( Leb(W_n) + \frac{r}{\pi} U(W_n)\right) \mathbb{Q}_L(\D r) \right)^{\frac{1}{\alpha}}\\
& = \frac{1}{\sqrt{\tau_n Leb(W_n)\mathbb{E}_L l^2}}\left( Leb(W_n)\mathbb{E}_L l^{\alpha} + \frac{U(W_n)}{\pi} \mathbb{E}_L l^{\alpha + 1} \right)^{\frac{1}{\alpha}},
\end{align*}
where $U(W_n)$ denotes the perimeter of the set $W_n$. Note that since $\mathbb{Q}_L$ has a compact support, it has all moments finite. Then the bound in Theorem \ref{th3.4} can be evaluated as
\begin{align*}
& d_W (I_{\xi^{(n)}}(\psi_n), Z) \leq \sqrt{\frac{2}{\pi}}\sqrt{1-2 \tau_n (1-\beta_n b) ||\psi_n||^2 _{L^2(\comp{2}, \lambda)} + \tau_n ^2 ||\psi_n||^4 _{L^2(\comp{2}, \lambda)}} \\
& + \tau_n ||\psi_n||^3 _{L^3(\comp{2}, \lambda)} + \sqrt{\frac{2}{\pi}} \tau_n ^2 ||\psi_n||^2 _{L^1(\comp{2}, \lambda)} |1-\e ^{-\beta_n}| \\
& +  2 \tau_n ^2 ||\psi_n||^2 _{L^{2}(\comp{2}, \lambda)} ||\psi_n|| _{L^1(\comp{2}, \lambda)} |1-\e ^{-\beta_n}| + \tau_n ^3||\psi_n||^3 _{L^1(\comp{2}, \lambda)} |1-\e ^{-\beta_n}|^2\\
& = \sqrt{\frac{2}{\pi}} \sqrt{1-2 (1-\beta_n b) \left(1 + \dfrac{1}{\pi}\dfrac{U(W_n)}{Leb(W_n)}\dfrac{\mathbb{E}_L l^{3}}{\mathbb{E}_L l^2} \right)+ \left(1 + \dfrac{1}{\pi}\dfrac{U(W_n)}{Leb(W_n)}\dfrac{\mathbb{E}_L l^{3}}{\mathbb{E}_L l^2} \right)^2}\\
& + \dfrac{1}{\sqrt{\tau_n (\mathbb{E}_L l^2)^{3}}}  \left( \dfrac{1}{\sqrt{Leb(W_n)}}\mathbb{E}_L l^{3} + \dfrac{1}{\pi}\dfrac{U(W_n)}{Leb(W_n)^{3/2}} \mathbb{E}_L l^{4} \right)\\
& + \sqrt{\dfrac{2}{\pi}} \dfrac{\tau_n}{\mathbb{E}_L l^2} |1-\e ^{-\beta_n}| \left(\sqrt{Leb(W_n)} \mathbb{E}_L l + \dfrac{1}{\pi}\dfrac{U(W_n)\mathbb{E}_L l^2}{\sqrt{Leb(W_n)}} \right)^2\\
& + 2 \sqrt{\tau_n} |1-\e ^{-\beta_n}| \left( 1 + \dfrac{1}{\pi}\dfrac{U(W_n)}{Leb(W_n)} \dfrac{\mathbb{E}_L l^{3}}{\mathbb{E}_L l^2} \right)\left(\sqrt{Leb(W_n)}\dfrac{\mathbb{E}_L l}{\sqrt{\mathbb{E}_L l^2}}  + \dfrac{1}{\pi}\dfrac{U(W_n)\sqrt{\mathbb{E}_L l^2}}{\sqrt{Leb(W_n)}} \right) \\
& + \tau_n ^{3/2} |1-\e ^{-\beta_n}|^2 \left( \sqrt{Leb(W_n)} \dfrac{\mathbb{E}_L l}{\sqrt{\mathbb{E}_L l^2}} + \dfrac{1}{\pi}\dfrac{U(W_n)}{\sqrt{Leb(W_n)}}\mathbb{E}_L l^{2} \right)^{3}.
\end{align*}
The convexity of $W_n$ implies ${U(W_n)}/{Leb(W_n)}\to 0$ as $n \to \infty $. Combined with the assumed growth of $Leb(W_n)$, also $d_W (I_{\xi^{(n)}}(\psi_n), Z) \to 0$ as $n$ approaches $+\infty$. 
\end{proof}

The assumption of $\beta_n \to 0$ as $n \to \infty$ in Theorem \ref{th3.6} is limiting, analogously to the assumption of $r = 1/n$ in Example $5.9$ in \cite{Tor17}, where $r$ was the hard-core distance. It says that the interactions tend to zero in the sequence of processes investigated. Up to our opinion the presented methodology does not enable to~relax the~assumption $\beta_n \to 0,$ it is an open problem for further research. Also we are able to provide Gaussian approximation for functionals of type $\sum_{x\in\mu}\phi (x)$ here and not for interaction functionals of type $\sum_{x\in\mu}\phi (x,\mu ),$ e.g. the total number of intersections of segments in the window. Generalization of other approaches, e.g. that of \cite{BYY16}, to the space of compact sets, seems to be promising.

\subsection*{Acknowledgement}
\small
This work was supported by Czech Science Foundation, project 16-03708S, and by Charles University, project SVV 2017 No. 260454. 
The authors wish to thank to professor Guenter Last (Karlsruhe Institute of Technology) for suggestions concerning the model and literature, and for stimulating discussions.


\end{document}